\newcommand{\pa}{\ \, /\kern -0.75em / \ \,} 
\newcommand{\Rmnum}[1]{\expandafter\@slowromancap\romannumeral #1@}
\newtheorem{theorem}{Theorem}
\begin{document}

\title{Metric Matrix in Barycentric Coordinates}
\markright{Submission}
\author{Xi Feng}
\maketitle

\begin{abstract}
In this paper, the concept of the metric matrix is introduced to establish a concise and unified formulation for the inner product in barycentric coordinates. Building on this framework, we explore the intrinsic algebraic identities of barycentric coordinates and their direct correspondence with geometric theorems. Through this approach, we derive a series of novel results and provide new proofs for several classical theorems in triangle geometry. 
\end{abstract}
\keywords{Barycentric Coordinates \and Metric matrix \and Triangle geometry}
\section{Introduction}
\noindent
Barycentric coordinates provide an effective and natural framework for the study of triangle geometry, owing to their precise characterization of points and the inherent symmetry they preserve within the triangle. Fundamental formulas and computational techniques have been extensively discussed in the literature (\cite{capitan2015barycentric}; \cite{grozdev2016barycentric};
\cite{schindler2012barycentric}; 
\cite{yiu2001introduction};), and the explicit barycentric coordinates of numerous triangle centers are cataloged in the Encyclopedia of Triangle Centers (\cite{kimberling_etc}, hereafter referred to as ETC). Nevertheless, the representation of angles within this coordinate system is often algebraically intricate, which renders calculations particularly cumbersome when addressing problems involving circles. More generally, computations carried out in barycentric coordinates tend to be lengthy and offer limited geometric insight.

In this work, we revisit the foundational results of planar geometry from the Cartesian coordinate perspective and establish their barycentric counterparts, with particular emphasis on deriving the inner product formula between two points. Although similar expressions have been reported in prior studies (\cite{volenec2003metrical}; \cite{volenec2015baricentricke}; \cite{zolotov2022scalar}), we introduce the concept of the metric matrix to establish the inner product formula with a compact matrix representation. This approach also yields an elegant and concise formulation for angle calculations.

Building upon this theoretical foundation, several classical theorems, as collected in \cite{johnson2013advanced}, are revisited and re-established through this novel and streamlined methodology.

Our investigation reveals that many geometric theorems can be reformulated as matrix identities in barycentric coordinates, and that the transformation of these identities can often supplant traditional geometric derivations, leading to new and insightful results. Moreover, while the introduction of new points typically involves inner and cross product operations on existing points, their explicit coordinate expressions are frequently unnecessary for the derivation of geometric properties.
\section{Basic formulae in barycentric coordinate}
\label{sec:bary}
Let $\triangle ABC$ be a fixed nondegenerate reference triangle. The side lengths of the $\triangle ABC$ are denoted by $a = |BC|,\,\,b = |CA|$ and $c = |AB|$, where $|XY|$ denotes the Euclidean distance between points $X$ and $Y$. The Conway notation is introduced as follows: 
\begin{align}
    &S_A=\frac 12\left(b^2+c^2-a^2\right),
   &&S_B=\frac 12\left(c^2+a^2-b^2\right),
   &&S_C=\frac 12\left(a^2+b^2-c^2\right).
\end{align}
For a point $P$ in the plane, an ordered triple $(\alpha_P,\, \beta_P,\, \gamma_P)$ is introduced to represent the position $P$ relative to the reference triangle $\triangle ABC$, such that the following relationship holds:
\begin{align}
    x_A \alpha_P+x_B \beta_P+x_C \gamma_P&=x_P,\notag\\
    y_A \alpha_P+y_B \beta_P+y_C \gamma_P&=y_P,
    \label{tranform}\\
    \alpha_P+\beta_P+\gamma_P&=1,\notag
\end{align}
where $(x_P, y_P)$ denotes the Cartesian coordinates of the point $P$. The equations (\ref{tranform}) can be converted to the matrix form as follows:
\begin{align}
    \mathbf{S}\mathbf{P}^T=
  \begin{bmatrix}      x_P \\ y_P \\ 1\end{bmatrix},
  \label{transform formulae}
\end{align}
where
\begin{align*}
    P=\begin{bmatrix}\alpha_P & \beta_P & \gamma_P\end{bmatrix},
\end{align*}
and
\begin{align*}
    \mathbf{S}=\begin{bmatrix}
    x_A & x_B & x_C \\
    y_A & y_B & y_C \\
    1 & 1   & 1   \\
    \end{bmatrix}.
\end{align*}
Without causing confusion, non-bold capitals represent both points and their barycentric coordinate row vectors in the following article. The equation (\ref{transform formulae}) is referred to as the coordinate transformation formula, 
and the oriented area of the reference triangle $ABC$ (the counterclockwise arrangement $A,\, B, \,C$ is positive) is equal to one-half of the determinant of the matrix $\mathbf{S}$, i.e.,
\begin{align*}
    S_{\triangle ABC}=\frac12\begin{vmatrix}
    x_A & x_B & x_C \\
    y_A & y_B & y_C \\
    1 & 1   & 1   \\
    \end{vmatrix}.
\end{align*}
Using Cramer's rule, we can obtain
\begin{align*}
    &\alpha_P =\frac{1}{2S_{\triangle ABC}}\begin{vmatrix}
    x_P & x_B & x_C \\
    y_P & y_B & y_C \\
    1 & 1   & 1   \\
    \end{vmatrix},&&
    \beta_P =\frac{1}{2S_{\triangle ABC}}\begin{vmatrix}
    x_A & x_P & x_C \\
    y_A & y_P & y_C \\
    1 & 1   & 1   \\
    \end{vmatrix},&&
    \gamma_P =\frac{1}{2S_{\triangle ABC}}\begin{vmatrix}
    x_A & x_B & x_P \\
    y_A & y_B & y_P \\
    1 & 1   & 1   \\
    \end{vmatrix}.
\end{align*}
So $P$ can be written as
\begin{align}
    P = \frac{1}{S_{\triangle ABC}}
\begin{bmatrix}      S_{\triangle PBC} & S_{\triangle APC} & S_{\triangle ABP}\end{bmatrix}.
\label{barycentric}
\end{align}
The expression (\ref{barycentric}) defines the normalized barycentric coordinates of point $P$, where the sum of its three components equals $1$. 
In addition to the standard points of Euclidean plane, it also includes a point at infinity for each class of parallel lines. Both standard points and points at infinity construct the real projective plane.
A point at infinity can be expressed as the difference between the coordinates of two points, $P-Q$, with zero sum of coordinates.

For three points $P,\, Q,\, R$ in the plane, using the coordinate transformation formula, we have
\begin{align*}
    \mathbf{S}\left[P;Q;R\right]^T=
  \begin{bmatrix}      x_P&x_Q&x_R \\ y_P&y_Q&y_R \\ 1&1&1\end{bmatrix}.
\end{align*}
Using the determinant multiplicative property, we have
\begin{align}
\left[P;Q;R\right]=\frac{S_{\triangle PQR}}{S_{\triangle ABC}},
\label{area formula}
\end{align}
where
\begin{align*}
\left[P;Q;R\right]=\begin{vmatrix}
    \alpha_P & \alpha_Q & \alpha_Q \\
    \beta_P & \beta_Q & \beta_Q \\
    \gamma_P & \gamma_Q   &\gamma_Q  \\
\end{vmatrix}.
\end{align*}
In particular, if three points $P,\, Q,\, R$ are collinear, then
\begin{align*}
    \left[P;Q;R\right]=0.
\end{align*}
In the Cartesian coordinates, the formula for the inner product of vectors can be written as
\begin{align*}
    \vv{PQ}\cdot \vv{RT}
=\left(\begin{bmatrix}x_Q & y_Q & 1\end{bmatrix}-\begin{bmatrix}x_P & y_P & 1\end{bmatrix}\right)
\left(\begin{bmatrix}x_T \\ y_T \\ 1\end{bmatrix}-\begin{bmatrix}x_R \\ y_R \\ 1\end{bmatrix}\right),
\end{align*}
where $P,\,Q,\,R,\,T$ denote four points in the plane.
Using the transformation formula (\ref{transform formulae}), we obtain
\begin{align*}
   \left(\begin{bmatrix}x_Q & y_Q & 1\end{bmatrix}-\begin{bmatrix}x_P & y_P & 1\end{bmatrix}\right)
\left(\begin{bmatrix}x_T \\ y_T \\ 1\end{bmatrix}-\begin{bmatrix}x_R \\ y_R \\ 1\end{bmatrix}\right)=
\left(Q-P\right)
\mathbf{S}^T\mathbf{S}
\left(T-R\right)^T,
\end{align*}
where
\begin{align*}
    \mathbf{S}^T\mathbf{S}=\begin{bmatrix}
x_A^2+y_A^2+1 & x_Ax_B+y_Ay_B+1 & x_Ax_C+y_Ay_C+1 \\ x_Ax_B+y_Ay_B+1 & x_B^2+y_B^2+1 & x_Bx_C+y_By_C+1 \\ 
x_Ax_C+y_Ay_C+1 & x_Bx_C+y_By_C+1 & x_C^2+y_C^2+1 \\ \end{bmatrix}.
\end{align*}
$\mathbf{S}^T\mathbf{S}$ is referred to as the metric matrix. From equation (\ref{barycentric}),
\begin{align*}
    \begin{bmatrix}1 & 1 & 1\end{bmatrix}P^T=1.
\end{align*}
Therefore, for any real numbers $m,\,n,\,l$, we have
\begin{align*}
&\left(Q-P\right)
\begin{bmatrix}m \\  n \\ l\end{bmatrix}
\begin{bmatrix}1 & 1 & 1\end{bmatrix}
\left(T-R\right)^T=0,\\
&\left(Q-P\right)
\begin{bmatrix}1 \\  1 \\ 1\end{bmatrix}
\begin{bmatrix}m & n & l\end{bmatrix}
\left(T-R\right)^T=0.
\end{align*}
Let
\begin{align*}
    f\left(m,\,n,\,l\right)=\frac{1}{2}
    \begin{bmatrix}m \\  n \\ l\end{bmatrix}
    \begin{bmatrix}1 & 1 & 1\end{bmatrix}+
    \frac{1}{2}
\begin{bmatrix}1 \\  1 \\ 1\end{bmatrix}
\begin{bmatrix}m & n & l\end{bmatrix}
= \frac{1}{2}
\begin{bmatrix}2m & m+n & m+l\\
               m+n& 2n  & n+l \\
               m+l& n+l & 2l\end{bmatrix},
\end{align*}
Therefore, we have
\begin{align*}
    \left(Q-P\right)
f\left(m,\,n,\,l\right)
\left(T-R\right)^T=0.
\end{align*}
Therefore, if $K$ is a metric matrix, then $K \pm f(m,n,l)$ is still a metric matrix. Without causing confusion, $K$ is used to represent the metric matrix in the following, and the key result could be summarized as follows:
\begin{theorem}[Inner product formula]
    \begin{align}
      \vv{PQ}\cdot \vv{RT}&=\left(Q-P\right)K\left(T-R\right)^T,
      \label{inner product barycentric}
      \end{align}
\end{theorem}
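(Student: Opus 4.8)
The plan is to build the identity directly out of the Cartesian dot-product expression already recorded above, using only the coordinate transformation formula~(\ref{transform formulae}). First I would transpose~(\ref{transform formulae}) to write each augmented row vector as $\begin{bmatrix}x_P & y_P & 1\end{bmatrix}=P\mathbf{S}^T$, so that the row displacement becomes $\begin{bmatrix}x_Q & y_Q & 1\end{bmatrix}-\begin{bmatrix}x_P & y_P & 1\end{bmatrix}=(Q-P)\mathbf{S}^T$; note that the appended third entry cancels, reflecting that $\vv{PQ}$ is a genuine planar vector. Symmetrically, the column displacement is $\mathbf{S}(T-R)^T$. Substituting both into the Cartesian formula puts $\mathbf{S}^T\mathbf{S}$ in the middle and yields $\vv{PQ}\cdot\vv{RT}=(Q-P)\,\mathbf{S}^T\mathbf{S}\,(T-R)^T$, which is exactly~(\ref{inner product barycentric}) for the canonical choice $K=\mathbf{S}^T\mathbf{S}$.

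Next I would upgrade this from the single matrix $\mathbf{S}^T\mathbf{S}$ to an arbitrary metric matrix. Because the barycentric coordinates are normalized, both $Q-P$ and $T-R$ have vanishing coordinate sum, i.e. $(Q-P)\begin{bmatrix}1&1&1\end{bmatrix}^T=0$ and $\begin{bmatrix}1&1&1\end{bmatrix}(T-R)^T=0$. Consequently the two rank-one terms making up $f(m,n,l)$ each annihilate against one of these displacements, so $(Q-P)\,f(m,n,l)\,(T-R)^T=0$ for every $m,n,l$. Hence replacing $\mathbf{S}^T\mathbf{S}$ by any $K=\mathbf{S}^T\mathbf{S}\pm f(m,n,l)$ leaves the bilinear value unchanged, which proves the identity for the whole equivalence class of metric matrices at once.

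Finally, to turn this into a usable, frame-independent statement I would exhibit a representative of $K$ built only from the side lengths. The natural move is to relate the off-diagonal entries via $c^2=|AB|^2=(x_A^2+y_A^2)+(x_B^2+y_B^2)-2(x_Ax_B+y_Ay_B)$, and likewise for $a^2,b^2$, and then to choose $m,n,l$ so as to subtract off the frame-dependent diagonal data. Taking $m=x_A^2+y_A^2+1$, $n=x_B^2+y_B^2+1$, $l=x_C^2+y_C^2+1$ should cancel every occurrence of $x_\bullet^2+y_\bullet^2$ together with the additive $1$'s, leaving the clean symmetric representative $K=-\tfrac12\left[\begin{smallmatrix}0&c^2&b^2\\ c^2&0&a^2\\ b^2&a^2&0\end{smallmatrix}\right]$ (equivalently a Conway-symbol form through $a^2=S_B+S_C$, and cyclically). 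The step I expect to be the main obstacle is precisely this cancellation bookkeeping: one must check that all quantities depending on the placement of the reference frame vanish simultaneously, so that the surviving matrix depends only on the intrinsic side lengths $a,b,c$ — which is exactly what makes the inner product formula meaningful independently of any Cartesian embedding.
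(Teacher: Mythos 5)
Your proposal is correct and follows essentially the same route as the paper: substitute the transformation formula~(\ref{transform formulae}) into the Cartesian dot product to obtain $(Q-P)\,\mathbf{S}^T\mathbf{S}\,(T-R)^T$, observe that the rank-one corrections $f(m,n,l)$ annihilate against the zero-sum displacements, and choose $m,\,n,\,l$ to cancel the frame-dependent entries, leaving the side-length representative $K_O$. The only cosmetic difference is that the paper then shifts once more by $f(S_A,\,S_B,\,S_C)$ to work with the diagonal representative $K_H$ of~(\ref{expressions KH}).
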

As a particular case, the formula for the distance between two points takes the form
\begin{align}
    |PQ|^2=\left(P-Q\right)K\left(P-Q\right)^T.
    \label{distance formula barycentric}
\end{align}
And the condition for two lines to be perpendicular can be obtained as follows:
\begin{align}
    PQ\perp RT\iff \left(Q-P\right)K\left(T-R\right)^T=0.
    \label{perpendicular condition}
\end{align}
Simple calculations yield
\begin{align*}
-\frac 12
    \begin{bmatrix}
        0 &c^2 & b^2\\
        c^2&0&a^2\\
        b^2&a^2&0
    \end{bmatrix}=\mathbf{S}^T\mathbf{S}
    -f\left(1,\,1,\,1\right)
    -f\left(|OA|^2,\,|OB|^2,\,|OC|^2\right)
\end{align*}
The obtained matrix is denoted as $K_O$, which is obviously only related to the size and shape of the triangle and is independent of the choice of the Cartesian coordinate system! Another important metric matrix is
\begin{align}
    K_H=\begin{bmatrix}
S_A & 0  & 0 \\
0  & S_B & 0 \\
0  & 0  & S_C \\
\end{bmatrix},
\label{expressions KH}
\end{align}
where
\begin{align*}
K_H=
    K_O
    +f\left(S_A,\,S_B,\,S_C\right).
\end{align*}
In the following article, $K_H$ is used as a representative of the metric matrix and is abbreviated as $K$.

\noindent In order to provide a clearer description of the vector cross product, the notion of an oriented angle is introduced. Let $\measuredangle QPR$ denote the directed angle (with the positive direction being clockwise for the arrangement $Q,\,P,\,R$); we have
\begin{align*}
    \vv{PQ}\times \vv{PR}&=|PQ|\cdot|PR|\sin\measuredangle QPR
    =2S_{\triangle PQR}.
\end{align*}
It is worth noting that the direction of the cross product has been ignored. Using (\ref{area formula}), we have
\begin{align*}
    \vv{PQ}\times\vv{PR}=2S_{\triangle ABC}\left[P;Q;R\right].
\end{align*}
Therefore, general cross product formula can be written as follows:
\begin{align}
    \vv{PQ}\times\vv{RT}
    &=\vv{PQ}\times\vv{PT}-\vv{PQ}\times\vv{PR}=2S_{\triangle ABC}\left[P;Q;T-R\right].
    \label{cross product formula}
\end{align}
Obverving that
\begin{align*}
    \vv{PQ}\cdot \vv{PR}&=|\vv{PQ}|\cdot|\vv{PR}|
    \cos\measuredangle QPR=\left(P-Q\right)K\left(P-R\right)^T,
\end{align*}
thus an important angle calculation formula could be obtained as follows:
\begin{theorem}[Oriented angle formula]
        \begin{align}
        \cot\measuredangle QPR=\frac{\left(P-Q\right)K\left(P-R\right)^T}{2S_{\triangle ABC}\left[P;Q;R\right]}.
        \label{angle formula barycentric}
    \end{align}
\end{theorem}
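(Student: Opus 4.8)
\section*{Proof proposal}

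The plan is to obtain the identity by dividing the inner-product expression by the cross-product expression, so that the common magnitude factor $|\vv{PQ}|\cdot|\vv{PR}|$ cancels and only the barycentric quantities survive. Concretely, I would begin from the elementary relation $\cot\measuredangle QPR=\dfrac{\cos\measuredangle QPR}{\sin\measuredangle QPR}$ and scale both numerator and denominator by the (nonzero) product $|\vv{PQ}|\cdot|\vv{PR}|$, which converts the cosine into a dot product and the sine into a (scalar) cross product.

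First I would invoke the two relations already recorded just above the statement: the cosine relation $\vv{PQ}\cdot\vv{PR}=|\vv{PQ}|\cdot|\vv{PR}|\cos\measuredangle QPR=(P-Q)K(P-R)^T$, and the sine relation $\vv{PQ}\times\vv{PR}=|\vv{PQ}|\cdot|\vv{PR}|\sin\measuredangle QPR=2S_{\triangle ABC}[P;Q;R]$, the latter combining the directed-area identity $\vv{PQ}\times\vv{PR}=2S_{\triangle PQR}$ with the area formula (\ref{area formula}). After the scaling above, the numerator of $\cot\measuredangle QPR$ is exactly the middle factor of the first relation and the denominator is the middle factor of the second; substituting the barycentric right-hand sides then yields the claimed formula directly, the factors $|\vv{PQ}|\cdot|\vv{PR}|$ cancelling identically.

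The points needing care are the hypotheses under which each step is legitimate. The cancellation presupposes $P\neq Q$ and $P\neq R$ so that the oriented angle is defined, and for $\cot\measuredangle QPR$ to be finite one needs $\sin\measuredangle QPR\neq 0$, i.e.\ $P,\,Q,\,R$ noncollinear, which is precisely $[P;Q;R]\neq 0$ and guarantees the denominator on the right is nonzero. I expect the only genuine subtlety to be the sign bookkeeping: one must check that the clockwise-positive convention adopted for $\measuredangle QPR$ makes the sign of $\sin\measuredangle QPR$ agree with that of the signed quantity $2S_{\triangle ABC}[P;Q;R]$, while the cosine factor carries no sign ambiguity. Both of these are already encoded in the two quoted relations, so once the orientation conventions are fixed the derivation reduces to a single division and no further computation is required.
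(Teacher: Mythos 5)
Your proposal is correct and follows essentially the same route as the paper: the text derives the formula precisely by pairing the relation $\vv{PQ}\cdot\vv{PR}=|\vv{PQ}|\,|\vv{PR}|\cos\measuredangle QPR=(P-Q)K(P-R)^T$ with the cross-product identity $\vv{PQ}\times\vv{PR}=2S_{\triangle ABC}[P;Q;R]$ and dividing, exactly as you describe. Your added remarks on the nondegeneracy hypotheses and sign conventions are sensible but not part of the paper's (very brief) derivation.
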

\noindent At this point, the inner product formula (\ref{inner product barycentric}) and cross product formula (\ref{cross product formula}), along with several important corollaries ((\ref{distance formula barycentric}), (\ref{perpendicular condition}) and (\ref{angle formula barycentric})) in barycentric coordinates, have been obtained. In the next section, basic properties of the reference triangle $\triangle ABC$ are introduced utilizing the formulas obtained in this section.
\section{Basic properties of the reference triangle}
\label{sec:triangle}
\noindent Let $X=\begin{bmatrix}x & y & z\end{bmatrix}$ be an arbitrary point in the plane; according to the equation (\ref{barycentric}), we have
\begin{align*}
    &x = \left[B;C;P\right],
   &&y = \left[C;A;P\right],
   &&z = \left[A;B;P\right].
\end{align*}
Therefore, the vertices of the triangle $\triangle ABC$ have simple coordinates:
\begin{align*}
    &A=\begin{bmatrix}1 & 0 & 0\end{bmatrix},
    &&B=\begin{bmatrix}0 & 1 & 0\end{bmatrix},
    &&C=\begin{bmatrix}0 & 0 & 1\end{bmatrix}.
\end{align*}
As the metric matrix $\mathbf{K_H}$ is abbreviated by $K$, simple calculations yield
\begin{align*}
    &AKA^T=S_A,&&BKB^T=S_B,&&CKC^T=S_C,\\
    &AKB^T=0,&&BKC^T=0,&&CKA^T=0.
\end{align*}
Therefore, the equations of the lines containing the sides $BC,\,CA,\,AB$ are, respectively,
\begin{align*}
    &AKX^T=0,
    &&BKX^T=0,
    &&CKX^T=0.
\end{align*}
The Conway notation $S_A,\,S_B,\,S_C$ involved in the expression of $K$ is usually used to represent the fundamental quantities of the triangle $\triangle ABC$. Let $a,\,b,\,c$ be the sides opposite to angles $\angle A,\,\angle B,\,\angle C$, respectively, then
\begin{align*}
    &a^2=S_B+S_C,
   &&b^2=S_C+S_A,
   &&c^2=S_A+S_B.
\end{align*}
The area $S$ of the triangle $\triangle ABC$ can be expressed as
\begin{align*}
S=\frac 12\left(S_BS_C+S_CS_A+S_AS_B\right)^{1/2},
\end{align*}
which is equivalent to Heron's formula.

\noindent $G$ is denoted as
\begin{align*}
    G=\frac13\left(A+B+C\right),
\end{align*}
rearranging gives
\begin{align}
    3G-A=2\cdot\frac 12\left(B+C\right).
    \label{G property}
\end{align}
Clearly, $G$ lies on the median through point $A$. By symmetry, the three medians of the triangle $\triangle ABC$ intersect at the point $G$, so it is the centroid of the triangle $\triangle ABC$, denoted
as $X(2)$ in ETC.
(\ref{G property}) also indicates that the centroid divides the median with the ratio of $1:2$. 

\noindent A classical extremal property of the centroid is concluded as follows:
\begin{theorem}
    The centroid of a triangle minimizes the sum of the squares of the distances to the three vertices.
\end{theorem}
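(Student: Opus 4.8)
The plan is to use the distance formula (\ref{distance formula barycentric}) together with a completion of the square (the Leibniz identity), which reduces the claim to the manifest nonnegativity of a single squared distance. First I would let $P$ be an arbitrary point given in normalized barycentric coordinates and write the objective as
\begin{align*}
    F(P)=|PA|^2+|PB|^2+|PC|^2=\sum_{V\in\{A,B,C\}}(P-V)K(P-V)^T.
\end{align*}
Expanding each quadratic form and using the symmetry of $K$ (so that $PKV^T=VKP^T$) gives
\begin{align*}
    F(P)=3\,PKP^T-2\,PK(A+B+C)^T+\left(AKA^T+BKB^T+CKC^T\right).
\end{align*}

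Next I would substitute the two identities already recorded for the reference triangle: $A+B+C=3G$ (immediate from the definition of $G$) and $AKA^T+BKB^T+CKC^T=S_A+S_B+S_C$. This yields
\begin{align*}
    F(P)=3\,PKP^T-6\,PKG^T+\left(S_A+S_B+S_C\right).
\end{align*}

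The decisive step is to complete the square about $G$. Since
\begin{align*}
    3\,(P-G)K(P-G)^T=3\,PKP^T-6\,PKG^T+3\,GKG^T,
\end{align*}
I would rewrite the objective as
\begin{align*}
    F(P)=3\,(P-G)K(P-G)^T+\left[(S_A+S_B+S_C)-3\,GKG^T\right]=3\,|PG|^2+F(G),
\end{align*}
where the last equality invokes (\ref{distance formula barycentric}) once more and identifies the bracketed constant as the value $F(G)$ attained at $P=G$.

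Finally, because $|PG|^2$ is a genuine Euclidean squared distance, it is nonnegative and vanishes exactly when $P=G$; hence $F(P)\ge F(G)$ with equality only at the centroid, which is the assertion. I expect no serious obstacle in the computation itself---it is a short algebraic expansion. The only point deserving care is the justification that $(P-G)K(P-G)^T\ge 0$ with equality iff $P=G$. This is immediate from (\ref{distance formula barycentric}), but it is worth remarking that $P-G$ is a zero-sum (point-at-infinity) vector, so one is really relying on positive definiteness of $K$ on the two-dimensional space of directions rather than on all of $\mathbb{R}^3$.
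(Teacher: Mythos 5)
Your proof is correct and takes essentially the same route as the paper: both amount to the Leibniz identity $|XA|^2+|XB|^2+|XC|^2=|GA|^2+|GB|^2+|GC|^2+3|GX|^2$, obtained by expanding the quadratic forms and completing the square about $G$ (the paper compresses this into the single observation $9GKG^T=S_A+S_B+S_C$). Your closing remark that nonnegativity of $(P-G)K(P-G)^T$ rests on $K$ being positive definite on the zero-sum directions is a sound point that the paper leaves implicit.
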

\begin{proof}
Since
\begin{align*}
    9GKG^T=AKA^T+BKB^T+CKC^T=S_A+S_B+S_C,
\end{align*}
we can obtain the identity
\begin{align*}
    |XA|^2+|XB|^2+|XC|^2=|GA|^2+|GB|^2+|GC|^2+3|GX|^2.
\end{align*}
Therefore,
\begin{align*}
    |XA|^2+|XB|^2+|XC|^2\geq|GA|^2+|GB|^2+|GC|^2.
\end{align*}
\end{proof}
\noindent The altitude of the triangle $\triangle ABC$ is defined as the line that passes through a vertex and is perpendicular to the opposite side. Therefore, the equations of the altitudes through points $A$, $B$, and $C$ are, respectively,
\begin{align*}
    &\left(B-C\right)K\left(X-A\right)^T=0,\\
    &\left(C-A\right)K\left(X-B\right)^T=0,\\
    &\left(A-B\right)K\left(X-C\right)^T=0.
\end{align*}
The identity
\begin{align*}
    \left(B-C\right)K\left(X-A\right)^T+
    \left(C-A\right)K\left(X-B\right)^T+
    \left(A-B\right)K\left(X-C\right)^T=0,
\end{align*}
shows that the three altitudes of a triangle intersect at a point, denoted as the orthocenter $H$ ($X(4)$ in ETC). 
Let $O=\frac12\left(3G-H\right)$, then
\begin{align*}
    \left(B-C\right)K\left(2O-B-C\right)^T
    =&\left(B-C\right)K\left(3G-H-B-C\right)^T\\
    =&\left(B-C\right)K\left(A-H\right)^T=0.
\end{align*}
By symmetry, we have
\begin{align*}
   &\left(C-A\right)K\left(2O-C-A\right)^T=0,\\
   &\left(A-B\right)K\left(2O-A-B\right)^T=0.
\end{align*}
Note that three perpendicular bisectors of the triangle intersect at the point $O$, which is the circumcenter of the triangle $\triangle ABC$ ($X(3)$ in ETC). Let the circumradius be $R$, according to the sine rule, we have $R = \frac{abc}{4S}$. To summarize the above results, we obtain:
\begin{theorem}[Euler's theorem]
    \begin{align}
    3G=2O+H,
    \label{triangle ABC Euler}
\end{align}
\end{theorem}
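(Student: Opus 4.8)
The plan is to recognize that the displayed identity $3G = 2O + H$ is essentially a rearrangement of the defining relation $O = \frac{1}{2}(3G - H)$ introduced just above the statement, so the genuine content of the theorem is the claim that this particular point $O$ really is the circumcenter. First I would treat the orthocenter $H$ as already established, its existence being guaranteed by the identity showing that the three altitude equations sum to zero and hence concur.

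Next I would set $O = \frac{1}{2}(3G - H)$ and verify membership on each perpendicular bisector in turn. A point $X$ lies on the perpendicular bisector of $BC$ exactly when the segment from the midpoint $\frac{1}{2}(B+C)$ to $X$ is orthogonal to $BC$; by the perpendicularity criterion (\ref{perpendicular condition}) this reads $\left(B-C\right)K\left(2X - B - C\right)^T = 0$. The decisive algebraic step is the substitution
\begin{align*}
    2O - B - C = \left(3G - H\right) - B - C = \left(A + B + C\right) - H - B - C = A - H,
\end{align*}
where I have used $3G = A + B + C$. This collapses the perpendicular-bisector condition for $O$ into $\left(B-C\right)K\left(A - H\right)^T = 0$, which is precisely the altitude equation through $A$ evaluated at $X = H$ and therefore holds automatically.

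By the symmetry of the construction the same computation, with the roles of $A,\,B,\,C$ cyclically permuted, shows that $O$ lies on the perpendicular bisectors of $CA$ and $AB$ as well. Hence $O$ is the common point of the three perpendicular bisectors, i.e.\ the circumcenter, and multiplying the defining relation by $2$ yields $3G = 2O + H$.

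The only real obstacle is conceptual rather than computational: one must notice that everything hinges on the single telescoping identity $2O - B - C = A - H$, which is exactly the device that converts a statement about perpendicular bisectors into one about altitudes. Once this cancellation is seen, each case is immediate and the three cases are identical up to cyclic symmetry, so no lengthy calculation is required.
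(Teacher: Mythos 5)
Your proposal is correct and follows essentially the same route as the paper: the paper likewise defines $O=\frac12(3G-H)$, reduces the perpendicular-bisector condition $\left(B-C\right)K\left(2O-B-C\right)^T=0$ to the altitude relation $\left(B-C\right)K\left(A-H\right)^T=0$ via the same telescoping substitution, and concludes by symmetry. No substantive differences.
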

The calculation yields two important properties of $H$ and $O$ as follows:
\begin{theorem}
\begin{align}
    &HKX^T=HKH^T,\label{H kernel property}\\
    &OKO^T+HKH^T=R^2.
    \label{OKO-HKH property}
\end{align}
\end{theorem}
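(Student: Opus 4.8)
The plan is to establish the two identities in order, extracting the second from the first. For the kernel property \eqref{H kernel property}, I would first observe that because every point's barycentric coordinates sum to $1$, the difference $X-H$ is a point at infinity, i.e.\ a row vector whose entries sum to zero. Such vectors form a two-dimensional space, spanned for instance by $B-C$ and $C-A$. Since \eqref{H kernel property} is equivalent to $HK(X-H)^T=0$, it suffices to show that $HK$ annihilates both of these basis vectors; expanding an arbitrary $X-H$ in the basis then finishes the identity.

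To obtain the two annihilation relations I would use that $H$ lies on the altitudes. The altitude through $A$ gives $(B-C)K(H-A)^T=0$, and since $AKB^T=CKA^T=0$ we have $(B-C)KA^T=0$, whence $(B-C)KH^T=0$; using the symmetry $K^T=K$ this reads $HK(B-C)^T=0$. The altitude through $B$ yields $HK(C-A)^T=0$ in exactly the same manner. This proves \eqref{H kernel property}, and in particular $AKH^T=HKH^T$ and $OKH^T=HKH^T$.

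For \eqref{OKO-HKH property}, the idea is to compute $R^2=|OA|^2$ with the distance formula \eqref{distance formula barycentric} and then eliminate the resulting cross term using the first identity. Expanding gives $R^2=OKO^T-2\,AKO^T+S_A$, so everything reduces to evaluating $AKO^T$. Rather than compute it directly, I would compute $AKH^T$: from the Euler relation \eqref{triangle ABC Euler} in the form $H=3G-2O$ and from $AKG^T=\tfrac13(AKA^T+AKB^T+AKC^T)=\tfrac{S_A}{3}$, one gets $AKH^T=S_A-2\,AKO^T$. But \eqref{H kernel property} together with $K^T=K$ forces $AKH^T=HKH^T$. Equating the two expressions gives $2\,AKO^T=S_A-HKH^T$, and substituting back collapses the computation to $R^2=OKO^T+HKH^T$ with no further work.

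I expect the main obstacle to be \eqref{OKO-HKH property}: the naive route substitutes explicit barycentric coordinates for $O$ and $H$ and grinds through $OKO^T$ and $HKH^T$ separately, which is long and unilluminating. The decisive observation is that the cross term $AKO^T$ hidden inside $|OA|^2$ can be traded for $HKH^T$ via the Euler relation and the kernel property, so that neither $OKO^T$ nor $HKH^T$ need ever be evaluated in closed form. Along the way I would verify carefully that $B-C$ and $C-A$ really do span all zero-sum vectors (they are independent and the space is two-dimensional) and that each transposition of a scalar such as $(B-C)KH^T=HK(B-C)^T$ is justified, since the entire argument leans on the symmetry of $K$.
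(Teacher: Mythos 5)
Your proposal is correct and follows essentially the same route as the paper: the kernel property is derived from the altitude conditions $(B-C)K(H-A)^T=0$ together with $AKB^T=BKC^T=CKA^T=0$, and the second identity is obtained by expanding $R^2=|OA|^2$ and trading the cross term for $HKH^T$ via Euler's relation $3G=2O+H$. Your explicit framing of $X-H$ as a zero-sum vector spanned by $B-C$ and $C-A$ is just a more verbose restatement of the paper's one-line step $HKX^T=(x+y+z)HKA^T=HKH^T$.
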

\begin{proof}
    Since $\left(B-C\right)K\left(H-A\right)^T=0$, then $BKH^T=CKH^T$. By symmetry, we obtain
\begin{align*}
    AKH^T=BKH^T=CKH^T.
\end{align*}
Therefore,
\begin{align*}
    HKX^T=\left(x+y+z\right)HKA^T=HKH^T.
\end{align*}
Using Euler's theorem and the equation (\ref{H kernel property}), $R$ can be calculated as follows:
    \begin{align*}
    R^2&=\left(O-A\right)K\left(O-A\right)^T=OKO^T-2OKA^T+S_A\\
    &=OKO^T-(3G-H)KA^T+S_A=OKO^T+HKH^T.
\end{align*}
\end{proof}
\noindent The equation (\ref{H kernel property}) is called the H kernel property, which is widely employed in the present paper. For instance, for any two points $X,\,Y$, we have
\begin{align*}
    \left(X-Y\right)KH^T=0.
\end{align*}
Using equations (\ref{H kernel property}) and (\ref{OKO-HKH property}), two equations of the circle are acquired as follows:
\begin{theorem}
The equation of the circumcircle of the triangle $\triangle ABC$ is
\begin{align}
    XKX^T-3GKX^T=0.
    \label{ABC cricumscribe 1}
\end{align}
\end{theorem}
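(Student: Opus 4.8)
The plan is to start from the geometric definition of the circumcircle as the locus of points at distance $R$ from the circumcenter $O$, and then reduce the resulting quadratic equation to the stated form using the three identities already established: Euler's theorem (\ref{triangle ABC Euler}), the H kernel property (\ref{H kernel property}), and the relation (\ref{OKO-HKH property}).

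First I would write the defining condition for a point $X$ on the circumcircle as $|OX|^2 = R^2$. Applying the distance formula (\ref{distance formula barycentric}) and expanding, using that $K$ is symmetric so that $XKO^T = OKX^T$, this becomes
\begin{align*}
    XKX^T - 2OKX^T + OKO^T = R^2.
\end{align*}
The goal is then to show that the linear and constant terms $-2OKX^T + OKO^T - R^2$ collapse to exactly $-3GKX^T$.

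The key manipulation is to eliminate $O$ in favor of $G$ and $H$. By Euler's theorem (\ref{triangle ABC Euler}) we have $2O = 3G - H$, hence $2OKX^T = 3GKX^T - HKX^T$. Substituting, the equation becomes $XKX^T - 3GKX^T + HKX^T + OKO^T - R^2 = 0$. Now the H kernel property (\ref{H kernel property}) gives $HKX^T = HKH^T$, and (\ref{OKO-HKH property}) gives $OKO^T + HKH^T = R^2$, so the three trailing terms satisfy $HKX^T + OKO^T - R^2 = HKH^T + OKO^T - R^2 = 0$ identically. What remains is precisely $XKX^T - 3GKX^T = 0$, as claimed.

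I do not anticipate a genuine obstacle here: the proof is a short algebraic collapse once the right substitution order is chosen. The only point requiring care is the bookkeeping of the symmetric cross terms in the expansion of $(X-O)K(X-O)^T$, and the verification that both $X$ and $O$ are taken in normalized coordinates (coordinate sum one), so that the difference vectors have zero coordinate sum and the distance formula applies. The elegance of the argument is that the combination $3G - 2O = H$ is exactly what activates the H kernel property, turning the $X$-dependent remainder into a constant that is then annihilated by (\ref{OKO-HKH property}).
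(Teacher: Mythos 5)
Your proposal is correct and follows essentially the same route as the paper's own proof: expand $|XO|^2-R^2$, substitute $2O=3G-H$ via Euler's theorem, and collapse the remainder using the H kernel property (\ref{H kernel property}) together with (\ref{OKO-HKH property}). No gaps; the bookkeeping remarks about symmetry of $K$ and normalized coordinates are sound and merely make explicit what the paper leaves implicit.
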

\begin{proof}
    \begin{align*}
    &|XO|^2-R^2\\
    =&XKX^T-2OKX^T+OKO^T-R^2\\
    =&XKX^T-3GKX^T+HKH^T+OKO^T-R^2\\
    =&XKX^T-3GKX^T.
\end{align*}
\end{proof}
\begin{theorem}
The equation of the nine-point circle of the triangle $\triangle ABC$ is
\begin{align}
    XKX^T-\frac 32GKX^T=0.
    \label{ABC nine-point circle}
\end{align}
\end{theorem}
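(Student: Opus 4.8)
The plan is to exploit the fact that the nine-point circle is the image of the circumcircle under the homothety centred at the orthocenter $H$ with ratio $\tfrac12$, which lets me recycle the circumcircle equation (\ref{ABC cricumscribe 1}) rather than recompute a centre and radius from scratch. Concretely, the midpoints of $HA$, $HB$, $HC$ lie on the nine-point circle and are the images of the vertices under $P\mapsto \tfrac12(P+H)$; so a point $X$ lies on the nine-point circle precisely when $Y=2X-H$ lies on the circumcircle. I would therefore substitute $Y=2X-H$ into (\ref{ABC cricumscribe 1}) and simplify.

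Then the work reduces to expanding $YKY^T-3GKY^T$ with $Y=2X-H$. Expanding the quadratic term gives $YKY^T = 4XKX^T - 4XKH^T + HKH^T$, and the linear term gives $3GKY^T = 6GKX^T - 3GKH^T$. The key simplification is the H kernel property (\ref{H kernel property}): since $HKX^T=HKH^T$ for every $X$ and $K$ is symmetric, both $XKH^T$ and $GKH^T$ collapse to $HKH^T$. Substituting these, the $HKH^T$ contributions cancel, leaving $YKY^T-3GKY^T = 4XKX^T - 6GKX^T = 4\bigl(XKX^T-\tfrac32 GKX^T\bigr)$. Hence $Y$ lies on the circumcircle iff $XKX^T-\tfrac32 GKX^T=0$, which is the claimed equation.

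As an alternative I would verify the same result directly: the nine-point centre is $N=\tfrac12(O+H)$ with radius $\tfrac R2$, and computing $|XN|^2-\tfrac{R^2}{4}=(X-N)K(X-N)^T-\tfrac{R^2}{4}$ using Euler's theorem (\ref{triangle ABC Euler}), the H kernel property, and (\ref{OKO-HKH property}) again yields $XKX^T-\tfrac32 GKX^T$. I expect the only delicate point in either route to be bookkeeping with the H kernel property — making sure every mixed term involving $H$ is correctly reduced to $HKH^T$ (and likewise $OKH^T=HKH^T$ in the direct route), since it is precisely these reductions that force the $HKH^T$ and $OKO^T$ terms to cancel and isolate the clean $XKX^T-\tfrac32 GKX^T$ expression.
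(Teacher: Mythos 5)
Your core derivation is correct and is exactly the route the paper takes: the paper also obtains the equation by observing that $X$ lies on the circle through $\tfrac12(A+H)$, $\tfrac12(B+H)$, $\tfrac12(C+H)$ iff $2X-H$ lies on the circumcircle, and your expansion of $YKY^T-3GKY^T$ with $Y=2X-H$ (using the H kernel property to reduce $XKH^T$ and $GKH^T$ to $HKH^T$, noting $Y$ is normalized since its coordinates sum to $1$) correctly supplies the algebra that the paper leaves implicit with ``Based on this, the equation of the circle can be obtained.'' The one thing the paper does that you omit: having derived the equation of the circle through the three Euler points, it goes on to verify that the side midpoints satisfy it (via $MKM^T=\tfrac14 a^2=\tfrac32 GKM^T$ for $M=\tfrac12(B+C)$) and that the altitude feet satisfy it (via $H_AKH_A^T=\tfrac12(B+C)KH_A^T=\tfrac32 GKH_A^T$, using the H kernel property). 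Since the paper never establishes the classical nine-point theorem elsewhere, your argument as written only shows the displayed equation describes \emph{some} circle through the midpoints of $HA$, $HB$, $HC$; to justify calling it the nine-point circle you should either add these two verifications or explicitly invoke the classical concyclicity result. Your alternative direct route via $N=\tfrac12(O+H)$ and radius $\tfrac R2$ would also work but has the same identification issue, and moreover the paper only reads off $N$ and the radius at the end rather than starting from them.
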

\begin{proof}
Consider the circle passing through points $\frac12 \left(A+H\right),\,\frac12 \left(B+H\right),\,\frac12 \left(C+H\right)$. Obviously, it is similar to the circumcircle with respect to the orthocenter $H$ (with a similarity ratio of $1:2$). That is, if a point $X$ lies on this circle, then it $2X - H$ lies on the circumcircle. Based on this, the equation of the circle can be obtained as
\begin{align*}
    XKX^T-\frac 32GKX^T=0.
\end{align*}
Let $M$ be the midpoint of side $BC$, obviously,
\begin{align*}
    MKM^T=\frac 14a^2=\frac 32GKM^T.
\end{align*}
By symmetry, the circle passes through the midpoint of three sides of the triangle $ABC$. Leaving the foot of $H$ to the side $BC$ as $H_A$, it is easy to acquire $H_AKA^T=0$ and $\left(H_A-H\right)K\left(H_A-B\right)^T=0$. According to H kernel property, we obtained
\begin{align*}
    H_AKH_A^T=H_AKB^T=H_AKC^T.
\end{align*}
Therefore,
\begin{align*}
    H_AKH_A^T=\frac 12\left(B+C\right)KH_A^T=\frac 32GKH_A^T.
\end{align*}
By symmetry, the feet of $H$ to the sides $CA$ and $AB$ both lies on the circle. The center of nine-point circle is denoted as $N$ ($X(5)$ in ETC), and it is clear that $N = \frac{1}{2}\left(O + H\right)$, with a radius of $\frac{1}{2}R$. 
\end{proof}
\noindent Considering the point $I$ inside the triangle $\triangle ABC$ that is equidistant from the three sides of the triangle $ABC$, we have
\begin{align*}
\left[B;C;I\right]:\left[C;A;I\right]:
\left[A;B;I\right]=a:b:c.
\end{align*}
Therefore, $I$ can be expressed as
\begin{align*}
    I=\frac{1}{2p}\left(aA+bB+cC\right),
\end{align*}
where $p=\frac 12\left(a+b+c\right)$ is the semiperimeter. Therefore, the three internal angle bisectors of the triangle $\triangle ABC$ intersect at $I$ ($X(1)$ in the ETC), which is also the center of the incircle of the triangle $\triangle ABC$. Let the radius of this circle be $r$, Using the area property, we have $r = \frac{S}{p}$.

\noindent Similarly, $I_A$ is denoted as
\begin{align*}
    &I_A=\frac{1}{2\left(p-a\right)}\left(-aA+bB+cC\right).
\end{align*}
The distances $I_A$ to the sides of the triangle $\triangle ABC$ are also equal. $I_A$ is the center of the excircle opposite the vertices $A$ of the triangle $\triangle ABC$. Let the radius of this circle be $r_a$, we have $r_a=\frac{S}{p-a}$. Simple calculations yield:
\begin{align}
    & IKI^T=2r^2,&&3GKI^T=2Rr+2r^2,  
    \label{IKI property}
\end{align}
and
\begin{align}
       &I_AKI_A^T=2r_a^2
    &&3GKI_A^T=-2Rr_a+2r_a^2. 
    \label{IAKIA property}
\end{align}
Using the equations (\ref{IKI property}) and (\ref{IAKIA property}), the following conclusions are obtained:
\begin{theorem}
The equation of the incircle of the triangle $\triangle ABC$ is
\begin{align*}
    XKX^T-2IKX^T+\frac 12IKI^T=0.
\end{align*}
The equation of the excircle opposite to vertex $A$ is
\begin{align*}
    XKX^T-2I_AKX^T+\frac 12I_AKI_A^T=0.
\end{align*}
\end{theorem}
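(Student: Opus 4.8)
The plan is to recognize the incircle as a metric locus and then translate that locus into barycentric form via the distance formula (\ref{distance formula barycentric}). By definition the incircle is the set of points $X$ lying at distance $r$ from the incenter $I$, that is, the solution set of $|XI|^2 = r^2$. Since $I$ is a normalized point and any point $X$ on the circle is also normalized, the difference $X - I$ has coordinate sum zero, so the distance formula applies directly to $|XI|^2$.

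First I would expand $|XI|^2 = (X - I)K(X - I)^T$ using the symmetry of $K$, which gives $|XI|^2 = XKX^T - 2IKX^T + IKI^T$; the two cross terms $XKI^T$ and $IKX^T$ coincide because $K$ is symmetric, producing the single coefficient $-2$. Setting this equal to $r^2$ yields $XKX^T - 2IKX^T + IKI^T - r^2 = 0$. The only remaining task is to eliminate the radius. Here I would invoke the identity $IKI^T = 2r^2$ from (\ref{IKI property}), so that $r^2 = \frac{1}{2}IKI^T$; substituting this produces exactly $XKX^T - 2IKX^T + \frac{1}{2}IKI^T = 0$, as claimed. Because each of these manipulations is an equivalence rather than a one-way implication, the displayed equation is precisely the locus $|XI| = r$, which establishes that it is the equation of the incircle.

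For the excircle opposite vertex $A$ the argument is identical in structure: the excircle is the set of points at distance $r_a$ from $I_A$, so the distance formula gives $XKX^T - 2I_AKX^T + I_AKI_A^T - r_a^2 = 0$, and substituting the companion identity $I_AKI_A^T = 2r_a^2$ from (\ref{IAKIA property}) removes $r_a^2$ and yields the stated equation. I do not anticipate a genuine obstacle here; the computation is short once the two quadratic identities (\ref{IKI property}) and (\ref{IAKIA property}) are in hand. The only points requiring care are the bookkeeping that $X$ and the center are normalized points so that the distance formula is applicable, and the observation that the symmetry of $K$ collapses the two cross terms into the single coefficient $-2$.
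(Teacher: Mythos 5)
Your proof is correct and follows exactly the route the paper intends: the paper gives no explicit proof but states that the theorem follows from the identities (\ref{IKI property}) and (\ref{IAKIA property}), and your expansion of $|XI|^2=(X-I)K(X-I)^T=r^2$ together with the substitution $r^2=\frac12 IKI^T$ (and likewise $r_a^2=\frac12 I_AKI_A^T$) is precisely that derivation. Your attention to the normalization of $X$ and $I$ so that the distance formula applies is appropriate and consistent with the paper's conventions.
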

\noindent Using the equations (\ref{H kernel property}), (\ref{OKO-HKH property}), and (\ref{IKI property}), two famous theorems about the distances between the centers of the triangle can be proved concisely:
\begin{theorem}[Euler's formula]
    \begin{align*}
        |OI|^2=R^2-2Rr.
    \end{align*}
\end{theorem}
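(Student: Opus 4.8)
The plan is to prove Euler's formula $|OI|^2 = R^2 - 2Rr$ by directly expanding the squared distance $|OI|^2$ using the distance formula (\ref{distance formula barycentric}), then substituting the circumcenter relation $O = \frac{1}{2}(3G - H)$ and invoking the already-established properties of $H$, $O$, and $I$. Writing $|OI|^2 = (O - I)K(O - I)^T = OKO^T - 2OKI^T + IKI^T$, I would handle the three resulting bilinear terms separately.

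First I would address the cross term $OKI^T$. Using $O = \frac{1}{2}(3G - H)$, I would write $2\,OKI^T = (3G - H)KI^T = 3GKI^T - HKI^T$. The H kernel property (\ref{H kernel property}) gives $HKI^T = HKH^T$ (since $I$ has unit coordinate sum, $HKI^T = HKH^T$), and equation (\ref{IKI property}) supplies $3GKI^T = 2Rr + 2r^2$. So the cross term reduces to known quantities plus $HKH^T$.

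Next I would combine the remaining pieces. The term $IKI^T = 2r^2$ comes straight from (\ref{IKI property}). For $OKO^T$ I would lean on (\ref{OKO-HKH property}), namely $OKO^T + HKH^T = R^2$. Assembling everything, the $HKH^T$ contribution from the cross term should cancel against the $HKH^T$ sitting alongside $OKO^T$ in the circumradius relation, leaving only $R^2$, $Rr$, and $r^2$ terms. I expect the $r^2$ contributions to cancel and the $R^2 - 2Rr$ to survive, which is exactly the claim.

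The main obstacle here is bookkeeping rather than conceptual difficulty: the entire proof hinges on the precise cancellation of the $HKH^T$ terms and the $r^2$ terms, so the delicate step will be tracking signs and coefficients carefully when substituting $O = \frac{1}{2}(3G - H)$ into $OKO^T$ (which produces mixed $GKG^T$, $GKH^T$, and $HKH^T$ terms) versus using the shortcut (\ref{OKO-HKH property}) directly. I would prefer the latter route, since it packages the awkward $OKO^T$ term together with $HKH^T$ and avoids re-deriving $GKH^T$, making the cancellation transparent. If the direct substitution is used instead, the H kernel property would be needed again to collapse $GKH^T = HKH^T$, but routing through (\ref{OKO-HKH property}) sidesteps this entirely.
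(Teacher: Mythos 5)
Your proposal is correct and follows essentially the same route as the paper: expand $|OI|^2=OKO^T-2OKI^T+IKI^T$, rewrite $2OKI^T=(3G-H)KI^T$ via Euler's theorem, apply the H kernel property together with (\ref{OKO-HKH property}) to absorb $HKH^T$ into $R^2$, and substitute the values from (\ref{IKI property}). The cancellations you anticipate (the $HKH^T$ terms and the $\pm 2r^2$ terms) are exactly the ones that occur in the paper's computation.
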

\begin{proof}
    \begin{align*}
    |OI|^2&=OKO^T-2OKI^T+IKI^T \\
    &=OKO^T-\left(3G-H\right)KI^T+IKI^T \\
    &=R^2-\left(2Rr+2r^2\right)+2r^2 \\
    &=R^2-2Rr,
\end{align*}
\end{proof}
At the same time, the Euler's inequality $R \geq 2r$ can be obtained. Similarly (using equation (\ref{IAKIA property})), we have
\begin{align*}
    |OI_A|^2=R^2+2Rr_a.
\end{align*}
\begin{theorem}[Feuerbach's theorem]
The nine-point circle is tangent to the incircle and externally tangent to the three excircles, i.e.,
\begin{align*}
    &|NI|=\frac12 R-r,
    &&|NI_A|=\frac 12R+r_a.
\end{align*}
\end{theorem}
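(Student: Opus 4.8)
The plan is to reduce both tangency statements to distance computations. Since the nine-point circle has center $N=\frac{1}{2}(O+H)$ and radius $\frac{1}{2}R$, while the incircle has center $I$ and radius $r$, the claim $|NI|=\frac{1}{2}R-r$ is precisely the assertion that the distance between the two centers equals the \emph{difference} of the radii, i.e.\ internal tangency. Likewise $|NI_A|=\frac{1}{2}R+r_a$ expresses that $|NI_A|$ equals the \emph{sum} of the nine-point radius and the exradius $r_a$, i.e.\ external tangency. Thus it suffices to evaluate $|NI|^2$ and $|NI_A|^2$ with the distance formula (\ref{distance formula barycentric}) and recognize each as a perfect square.

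First I would expand, using $N=\frac{1}{2}(O+H)$,
\begin{align*}
    |NI|^2 = NKN^T - 2NKI^T + IKI^T.
\end{align*}
For the first term, $NKN^T=\frac{1}{4}(OKO^T+2OKH^T+HKH^T)$; the H kernel property (\ref{H kernel property}) gives $OKH^T=HKH^T$, and (\ref{OKO-HKH property}) gives $OKO^T=R^2-HKH^T$, so that $NKN^T=\frac{1}{4}(R^2+2HKH^T)$. For the cross term, $NKI^T=\frac{1}{2}(OKI^T+HKI^T)$ with $HKI^T=HKH^T$ again by (\ref{H kernel property}); to reach $OKI^T$ I would invoke Euler's theorem $3G=2O+H$ to write $2OKI^T=3GKI^T-HKH^T$ and then insert the incircle identities (\ref{IKI property}), namely $IKI^T=2r^2$ and $3GKI^T=2Rr+2r^2$.

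Assembling these pieces, I expect every term involving $HKH^T$ to cancel, leaving
\begin{align*}
    |NI|^2 = \frac{1}{4}R^2 - Rr + r^2 = \left(\frac{1}{2}R-r\right)^2.
\end{align*}
The sign ambiguity on taking the square root is resolved by Euler's inequality $R\geq 2r$, which forces $\frac{1}{2}R-r\geq 0$ and yields $|NI|=\frac{1}{2}R-r$. The excircle case runs identically, replacing (\ref{IKI property}) by (\ref{IAKIA property}) (where $3GKI_A^T=-2Rr_a+2r_a^2$); the sign flip there produces $\left(\frac{1}{2}R+r_a\right)^2$ and hence $|NI_A|=\frac{1}{2}R+r_a$. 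The main obstacle is bookkeeping rather than ideas: the whole argument hinges on the clean collapse of every $H$-cross term to the single quantity $HKH^T$ via the kernel property, so that this otherwise-unknown quantity drops out and the squared distances emerge as exact squares — and on correctly matching the resulting difference versus sum of radii to internal versus external tangency.
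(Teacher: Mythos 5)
Your proposal is correct and follows essentially the same route as the paper: expand $|NI|^2$ from $N=\tfrac12(O+H)$, collapse every $H$-cross term via the kernel property (\ref{H kernel property}), and substitute Euler's theorem together with (\ref{OKO-HKH property}), (\ref{IKI property}) and (\ref{IAKIA property}) so that $HKH^T$ cancels and a perfect square remains. The only cosmetic difference is that the paper regroups the intermediate expression to reuse Euler's formula $|OI|^2=R^2-2Rr$ directly, whereas you substitute the raw identities.
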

\begin{proof}
    \begin{align*}
    |NI|^2&=\left(\frac{O+H}{2}-I\right)K\left(\frac{O+H}{2}-I\right)^T \\
    &=\frac{1}{4}\left(OKO^T+4IKI^T-4OKI^T-HKH^T\right) \\
    &=\frac{1}{4}\left(2IKI^T+
    2\left(O-I\right)K\left(O-I\right)^T-\left(OKO^T+HKH^T\right)\right) \\
    &=\left(\frac 12 R-r\right)^2.
\end{align*}
Similarly, $|NI_A|^2=\left(\frac 12 R+r_a\right)^2$
\end{proof}
\section{Conclusion}
In this study, metric matrix is introduced to yield a matrix representation of the inner product in barycentric coordinates, and several related corollaries have been proved, as outlined in Section \ref{sec:bary}. Building on these results, the fundamental properties of triangle geometry have been explored in Section \ref{sec:triangle}.

The novel results obtained through this framework demonstrate that the methods developed herein not only improve computational efficiency but also provide deeper geometric insight, thereby underscoring the utility of barycentric coordinates as both a theoretical foundation and a practical tool for advanced research in geometry.

Looking ahead, future work will seek to extend this framework to cover a broader range of geometric topics, including lines, circles, conic sections, quadrilaterals and geometric inequalities. Additionally, we anticipate that the system introduced in this paper will not only contribute to the discovery of new results in triangle geometry but also hold significant potential for applications in related fields, such as computer graphics and computational geometry.

\end{document}